\documentclass[letterpaper,11pt,reqno]{amsart}

\usepackage{amsmath, amssymb, amscd, MnSymbol,mathrsfs}


\newtheorem{theorem}{Theorem}[section] 

\newtheorem{prop}[theorem]{Proposition}

\theoremstyle{plain}
\newtheorem{example}{Example}

\theoremstyle{definition}
\newtheorem{definition}{Definition}[section] 

\theoremstyle{remark}
\newtheorem{remark}{Remark}[section] 


\DeclareMathAlphabet{\mathpzc}{OT1}{pzc}{m}{it}

\newcommand{\LogL}{\mathsf{L}}
\newcommand{\ruleR}{\mathsf{r}}
\newcommand{\bydef} {:=} 
\newcommand{\lbr}{\langle}
\newcommand{\rbr}{\rangle}
\newcommand{\mpos}{\oplus}
\newcommand{\mrej}{\ominus}
\newcommand{\mnu}{\odot}
\newcommand{\notmnu}{\overline{\odot}}

\newcommand{\Stms}{\mathcal{S}}

\newcommand{\mAStms}{\mathcal{S}_a}
\newcommand{\mand}{\wedgedot}
\newcommand{\mor}{\veedot}
\newcommand{\mimpl}{\dot{\to}}
\newcommand{\mneg}{\dotminus}

\newcommand{\mtop}{\downvdash}
\newcommand{\mbot}{\perp}
\newcommand{\impl}{\rightarrow}
\newcommand{\emptyd}{\mathrm{V}}

\newcommand{\SF}{\mathbf{S4}}
\newcommand{\Vars}{\mathcal{P}}
\newcommand{\Frm}{\mathsf{Fm}}
\newcommand{\CFin}{\boldsymbol{\mathsf{F}}}
\newcommand{\Ax}{\mathpzc{Ax}}
\newcommand{\Rules}{\mathsf{R}}
\newcommand{\DS}{\mathsf{S}}

\newcommand{\class}[1]{\mathcal{#1}}
\newcommand{\Logl}{\mathcal{L}} 
\newcommand{\means}{\leftrightharpoons}
\newcommand{\eqv}{\leftrightarrow}

\def\alg#1{\mathsf{#1}}
\def\Log#1{\mathscr{#1}}
\newcommand{\set}[2]{\{#1 \mid #2\}}

\begin{document}

\title{A Meta-Logic of Inference Rules: Syntax}

\author{Alex Citkin}

\address{Alex Citkin \\(Metropolitan Telecommunications, New York) \\30 Upper Warren Way Warren, NJ 07059}
\email{acitkin@gmail.com}

\begin{abstract}
 This work was intended to be an attempt to introduce the meta-language for working with multiple-conclusion inference rules that admit asserted propositions along with the rejected propositions. The presence of rejected propositions, and especially the presence of the rule of reverse substitution, requires certain change the definition of structurality.  
\end{abstract}

\keywords{propositional logic, multiple-conclusion rule, rejected proposition, {\L}-system, admissible rule, deductive system}

\maketitle

\section{Introduction}
\subsection{Motivation}

The idea of a meta-logic as introduced in the present paper was triggered by the following observations.

\textbf{Observation (A).} If $\LogL$ is a (propositional) logic and $\ruleR \bydef \Gamma/A$ is a structural (inference) rule, we say that $\ruleR$ is admissible in the logic $\LogL$ if any substitution that makes all premises from $\Gamma$ valid in $\LogL$, makes $A$ valid too. A logic $\LogL$ is said to be closed under (applications of) rule $\ruleR$ if the rule $\ruleR$ allows to derive from $\LogL$ only the formulas from $\LogL$. It is not hard to see that a rule $\ruleR$ is admissible in $\LogL$ if and only if $\LogL$ is closed under $\ruleR$. If we try to apply the definition of admissibility to multiple conclusion rules, the equivalence is not necessarily true. More precisely, a multiple-conclusion rule $\ruleR \bydef \Gamma/\Delta$ may be not admissible in $\LogL$ (there is a substitution that makes all premises valid and all conclusions not valid in $\LogL$), and yet $\LogL$ is closed under $\ruleR$: there are the deductive systems having CPL as a set of theorems and containing $\ruleR$ is an inference rule. An example is very simple and somewhat unexpected: a rule $A \lor B/A,B$ (the disjunction property) is, certainly, not admissible in the classical propositional logic (CPL thereafter), and nevertheless, if we add this rule to the rules of substitution and Modus Ponens, the logic will remain consistent. Hence, due to CPL being Post complete, the added rule does not change the logic and, therefore, does not allow us to derive from CPL any formulas that are not in CPL.       

\textbf{Observation (B).} The admissible rules are closely related to the refutation rules (in the sense of {\L}ukasiewicz, see e.g. \cite{Slupecki_Bryll_Wybraniec_1971,Slupecki_Bryll_Wybraniec_1972}): if a rule $A_1,\dots,A_n/B$ is admissible  in a given logic $\LogL$, the rule $\dashv B/\dashv A_1,\dots,\dashv A_n$ is a refutation rule for $\LogL$. So, in order to be able to study both types of rules at the same time, we need a proper framework, namely, we need to use the rules containing asserted formulas and rejected formulas. In turn, this will allow us to extend the notion of admissible rule to this kind of generalized rules. 

\textbf{Observation (C).} The use of multiple-conclusion rules in an inference, which essentially is a proof by cases, requires a better understanding under which condition we can eliminate a case/alternative. Indeed, in a proof by cases - if we want to prove a formula $A$ - we first arrive at a complete set of possible cases/alternatives $A_1,\dots,A_n$ and then we consider each case $A_i$ separately trying either to derive $A$ from $A_i$ or to show that the case $A_i$ is impossible by deriving from $A_i$ some kind of contradiction (semantically or syntactically). So, when we consider an alternative, we are either trying to derive a target formula, or to arrive at some kind of contradiction, and the latter lets us eliminate this alternative. If we deal with a formal proof, we are not able to employ the semantical means. Thus, in order to demonstrate a contradiction we are often trying to derive a negation of a formula that was proven earlier. But in the case when the language does not have a negation (or a constant for false) it is impossible. The alternative approach is to try and arrive at a contradiction by syntactically deriving the refutability of a formula that we already proved. This is another reason why we need to have the means for proving refutability as a part of our proof system.   

If we want to define a logic syntactically we employ a notion of a deductive system, and we understand this logic as a set of formulas derivable in a given deductive system. Often, we define inconsistency of a logic, or, more precisely, inconsistency of a deductive system, as an ability to derive any formula. Roughly speaking, abolishing the rule that allows to derive every formula from some form of contradiction allows us to deal with paraconsistent logics.

\textbf{The suggested approach.} Let us assign equal rights to asserted and rejected propositions\footnote{The idea to consider rejected propositions can be traced back as early as 1940-th to books by R.~Carnap (for more historical details and references see Section \ref{hist}).}. Let us consider a logic as a pair $\lbr \LogL^+, \LogL^-\rbr$ of sets of formulas, where $\LogL^+$ is a set of asserted propositions (theorems) and $\LogL^-$ is a set of rejected propositions (anti-theorems). Thus, at least in theory, we may have the case when a formula $A$ is asserted and rejected as well as the case when a formula $B$ is neither asserted, nor rejected. The first case is related to inconsistency/paraconsistency, while the second case is related to insufficiency of information. And a logic understood as such a pair represents, perhaps, better the situation with requests to a database when the responses to a request can be ambivalent.      

If we assign equal rights to the asserted and rejected propositions, it is natural to include both types of proposition into inference and consider the inference rules containing asserted propositions as well as rejected propositions. It means that we define the inference rules not on the sets of formulas, but rather on the sets of statements like $\mpos A$ - formula $A$ is asserted and $\mrej A$ - formula $A$ is rejected. Thus, we derive not a formula, but a statement about the formula. For instance, Modus Tollens can be represented in the following way $\mpos (A \impl B), \mrej B/ \mrej A$ (in the {\L}ukasiewicz's notation it would be $\vdash(A \to B),\dashv B/\dashv A$ , but we reserve $\vdash$ for a different use). Let us note that an ability to derive $\mpos A$ and $\mrej A$ for some formula $A$, represents some kind of inconsistency, while an ability to derive $\mpos A$ and $\mpos \neg A$ may not lead to inconsistency, especially if we do not accept a rule $\mpos \neg A/\mrej A$.

In order to make our work with statements easier, we construct a meta-logic. One of the challenges of using multiple-conclusion rules is definition a notion of inference. In \cite{{Kneale_Province_1956},Shoesmith_Smiley_Book} the reader can see how challenging it gets. In the proposed meta-logic, the inference can be defined as linear: with a rule $\Gamma/\Delta$ in the meta-logic we associate a statement $\mand \Gamma \mimpl \mor \Delta$, where $\mand,\mimpl,\mor$ are meta-connectives, and we use this statement in an inference as we normally use a formula (from more details see Section \ref{inference}).    

The differences between single- and multiple-conclusion rules lead to necessity to clarify such important notions as structurality, inference, closure, etc. The meta-logic that we are introducing is the first step in this direction.

\section{History of the Subject} \label{hist}

The idea of including refutation into inference process, as well as the idea that led to introducing the multiple conclusion rules and logics, came from the books \cite{Carnap_Introduction_1942,Carnap_Formalization_1943} by Carnap. Then, for quite some time, these two lines of research were conducted  independently by different researchers. Let us briefly look at the history of the subject and at the motivations that instigated different researchers. 

\subsection{R.~Carnap} \label{Carnap}

\subsubsection{Refutation} \label{refutation}

The idea to include refutation into inference process can be traced back at least to R.~Carnap\footnote{Some ideas of including refutation into syllogistic were introduced by Aristotle.}. Carnap was considering the calculi together with their interpretations: "Although the rules of a calculus do not speak about interpretations, they are nevertheless practically meant in such a way as to restrict possible interpretations" \cite[Paragraph 28]{Carnap_Introduction_1942}. He also observed that there are non-normal interpretations. And, according to Carnap, there are two kinds of non-normal interpretations: the ones that violate the law of contradiction by admitting a proposition and its negation, and the ones that violate the law of excluded middle by admitting neither a proposition, nor its negation is true. In order to exclude the first kind of non-normal interpretations Carnap suggested to add refutations to the calculus: "One new  syntactical concept which might be added to those used in customary calculi is 'C-false'. It is defined on the basis of 'directly C-false', which is defined by rules of refutation. By adding a rule of this kind to PC, the non-normal interpretations of the first kind can be excluded" (see \cite[Paragraph 20]{Carnap_Formalization_1943}).
And he has extended the notion of a calculus in the following way: "A syntactical system or \textit{calculus} $K$ is a system of formal rules. It consists of a classification of signs, the \textit{rules of formation} (defining 'sentence in $K$'), and the \textit{rules of deduction}. The rules of deduction usually consist of primitive sentences and rules of inference (defining 'directly derivable in $K$'). Sometimes $K$ also contains rules of refutation (defining 'directly refutable in $K$'). If $K$ contains definitions, they may be regarded as additional rules of deduction"( see \cite[Paragraph 24]{Carnap_Introduction_1942}). 

Carnap has also made the following important observation: "And in nearly all or perhaps all of the few calculi where rules of refutation are given, 'directly C-false' ('directly refutable') applies only to sentences or sentential classes from which every sentence is derivable" (see \cite[Paragraph 28]{Carnap_Introduction_1942}). That is, prior to Carnap, a formula would be considered refutable if adding this formula to axioms leads to inconsistency. As we shall see later in the Section , in many cases a formula is considered to be refutable, if its negation is provable. 

Carnap rejected such an approach to refutation (when a proposition is refuted if its negation is provable) on the grounds that this would not exclude the non-normal interpretations: think about single-element matrix, for instance (cf. \cite{Church_Review_1953}). His approach is different: a proposition is C-false (refutable) if an anti-axiom (i.e. a proposition accepted as directly refutable) can be derived from it \cite[D28.3]{Carnap_Introduction_1942}. And he adds a single anti-axiom $\emptyd$ (a constant 'False') to a calculus .

\subsubsection{Junctives} \label{junctives}

In \cite{Carnap_Formalization_1943} Carnap introduced the notions that later became known as multiple-conclusion logics (see, for instance, \cite{Shoesmith_Smiley_Book}). If adding the rules of refutation to a calculi solved a problem with non-normal interpretations of the first kind, adding the junctives solves the problem with non-normal interpretations of the second kind (see \cite[Section D]{Carnap_Formalization_1943}). A \textit{conjunctive} is an ordered pair $\alg{P}^\land$, where $\alg{P}$ is a finite set of propositions and $\alg{P}^\land$ is asserted if every proposition from $\alg{P}$, is asserted. A \textit{disjunctive} is an ordered pair $\alg{P}^\lor$, where $\alg{P}$ is a finite set of propositions and $\alg{P}^\lor$ is asserted if at least one  member of $\alg{P}$ is asserted. Note, that the junctive $\emptyset^\land$ represents constant 'true', while the junctive $\emptyset^\lor$ represents constant 'false'. 

A regular (structural) inference rule $A_1,\dots,A_n/B$ can be regarded as a rule that allows to derive a disjunctive $\{B\}^\lor$ from a conjunctive $\{A_1,\dots,A_n\}^\land$. And Carnap suggests to also consider the rules of form $A/\alg{Q}^\lor$ with disjunctive as a consequence. He used such kind of rule and in \cite[Section E.26]{Carnap_Formalization_1943} where he constructs the calculus $PC^*$ for the classical propositional logic  and which, besides regular axioms and rules, contains the rule 
\begin{equation}
p \lor q/\{p,q\}^\lor. \label{disj}
\end{equation}
He also included a rule for refutation. So, if we consider, for instance, the Boolean algebras as the models for CPL in the Carnap's version, the single-element algebra is not a model due to $\dashv p$ does not hold in it, and all Boolean algebras with more then 2 elements are not the models due to \eqref{disj} does not hold in them.

\begin{remark} The Gentzen's sequents can be viewed as a multipme-conclusion constructions. The following quotation from \cite{Shoesmith_Smiley_Book} explains why Carnap, and not Gentzen, perhaps, should be regarded as the one who introduced multiple-conclusion rules: "Its germ can be found in Gerhard Gentzen's celebrated \textit{Untersuchungen {\"u}ber das logische Schliessen} (1934) if one is prepared to interpret his calculus of 'sequents' as a metatheory for a multiple-conclusion logic, but this is contrary to Gentzen's own interpretation, and it was Rudolf Carnap who first consciously broached the subject in his book \textit{Formalization of logic} (1943)" (cf. also the historical note in \cite[Section 2.1]{Shoesmith_Smiley_Book}).
\end{remark}

\subsection{J.~{\L}ukasiewicz} \label{Lukasiweicz}

In the first edition of his book \cite{Lukasiewicz_Book} in 1951 J.~{\L}ukasiewicz (who, as it appears, was not familiar with Carnap's research)  also included refutation into calculus for CPL. More precisely, he added to a regular classical propositional calculus with rules Modus Ponens (MP) and Substitution (Sb) an anti-axiom $\dashv p$, where $p$ is a (propositional) variable, and two rules: Modus Ponens (MT) and Reverse Substitution (RS)
\[
\dashv \sigma(A)/\dashv A, \text{ for every substitution } \sigma \text{ and formula } A. \tag{RS}
\] 

The {\L}uksiewicz's motivation was totally different from the Carnap's. In his book \cite[Preface]{Lukasiewicz_Book} J.~{\L}ukasiewicz writes: "The most important new results in this part I consider to be the proof of decision, given by my pupil J. S{\l}upecki, and the idea of rejection introduced by Aristotle and applied by myself to theory of deduction." And he added "Modern formal logic, as far as I know, does not use 'rejection' as an operator opposed to Frege's 'assertion'. The rules of rejection are not yet known"\cite[Paragraph 20]{Lukasiewicz_Book}. 

{\L}ukasiewicz had also observed that for the formal system representing Aristotle syllogistic it is not enough to use only the rules of reverse substitution and Modus Tollens for refutation. He wrote \cite[p. 75]{Lukasiewicz_Book} "A new rule of rejection must be added to the system to complete the insufficient characterization of the Aristotelian logic given by the four axioms. This rule was found by J.~S{\l}upecki."(for more about S{\l}upecki rules see \cite{Slupecki_Book_1948,Kulicki_Remarks_2002}) 

In \cite{Lukasiewicz_Intuitionistic_1952} {\L}ukasiewicz suggested that adding $\dashv p$ , RS, MT, $\dashv A, \dashv B / \dashv (A \lor B)$ to the regular axioms and rules of IPC would give the complete refutation system for IPL. He also did not identify rejection with negation. Specifically in \cite{Lukasiewicz_Intuitionistic_1952}, he wrote that "In my recently published work on Aristotle's Syllogistic I gave reasons for introducing "rejection" into classical theory of deduction as a complement of assertion".

Thus, the {\L}ukasiewicz's motivations were (a) to be able to contract a formal system for syllogistic, (b) to have a counterpart for the asserted proposition, (c) to have decidability as a result of axiomatization. 

\subsection{D.~Scott} \label{Scott}

It turned out that the axiomatization suggested by {\L}ukasiewicz for IPL is not complete. This gap was filled by D.~Scott. In \cite{Scott_Completeness_1957} he considered $\to,\top,\bot$-fragment of IPC. A refutation part of his calculus contains an anti-axiom $\dashv \bot$, the rules MT and RS, and some additional (rather complex) rules. The complete axiomatization for IPL that includes refutation was constructed much later by T.~Skura \cite{Skura1989}.

The goal of \cite{Scott_Completeness_1957} (besides fixing the shortcomings of {\L}ukasiewicz's conjecture) was "by use of refutation rules to enumerate the unprovable formulae in much the same way in which we enumerate the theorems, every formula being either provable or refutable. The sets of valid and invalid formulae are closed under the rules. Hence, no formula is both provable and refutable, the calculus is decidable, and a formula is  provable if and only if it is valid." Thus, the motivation here is to construct a calculus that gives decidability and semantical completeness. 

Later, in early 1970-th, D.~Scott introduced the multiple-conclusion consequence relations (see e.g.\cite{Scott_Engendering_1971,Scott_1974}). Nevertheless, he did not include in these relations the rejected propositions.  

\subsection{Refutation: Further development}

The axiomatic systems that include refutation can be roughly divided in three types: dual, complementary and mixed. 

\subsubsection{Dual Systems} A dual system is a system that allows to derive rejected propositions from rejected like we derive asserted propositions from asserted. Similarly to regular logical systems, these systems can be constructed in a form of a closure operator (e.g. \cite{Wojcicki_Dual_1973}). This kind of system have found applications in the computer science and artificial intelligence (e.g. \cite{Tiomkin_Proving_1988}).

\subsubsection{Complementary (Symmetric) Systems} The complementary systems contain, essentially, two subsystems: one for deriving the asserted propositions, and another - for deriving the rejected propositions. For instance, in \cite{Slupecki_Bryll_Wybraniec_1971,Slupecki_Bryll_Wybraniec_1972,Wybraniec_Waldmajer_2011} the authors consider two closure operators: the regular one $Cn$ that gives the theorems, and the complementary one $Cn^-$, that gives anti-theorems (see also \cite[Section 5.2]{Skura_Refutation_2011}). 

\subsubsection{Mixed Systems} In these systems the asserted propositions are derived from the asserted propositions, while the rejected propositions are derived from the rejected and, asserted propositions (think about Modus Tollens, for instance). The system suggested by {\L}ukasiewicz for CPL is, of course, of this type. It is the most common type of the refutation systems. The complete (mixed) systems were constructed for various of logics (see, for instance \cite{Dutkiewicz_Method_1989, Caferra_Zabel_Method_1992,Varzi_Complementary_1992,Goranko_Refutation_1994,Bonatti_Varzi_Meaning_1995,Skura1998,Sochacki_Axiomatic_2007,Caferra_Peltier_Accepting_2008}).

\subsubsection{Direct and Indirect Refutation} R.~Carnap and J.~{\L}ukasiewicz viewed the derivation of a rejection in different ways. 

R.~Carnap suggested that we can use the regular notion of derivation (that allows to derive a formula (proposition) from a set of formulas (propositions), and that we reject a formula $A$ if we can derive from $A$ a formula that we know is rejected, is an anti-axiom. Roughly speaking, if $A \vdash B$ and $B$ is rejected, we reject $A$. We call this approach \textit{indirect} and we call the refutation systems based on this approach \textit{C-system}\footnote{In \cite{Staszek_1971} indirect derivations are called i-derivations.}. For instance, if we add to CPC (with rules Modus Ponens and Substitution) an anti-axiom $\dashv p$ (where $p$ is a propositional variable), then, in this extended calculus, we are able to reject every classically invalid formula, for $p$ is derivable in CPC from every class classically invalid formula.   

J.~{\L}ukasiewicz, on the other hand, suggested that the rejected formulas should be derived from the asserted and rejected formulas by means of regular inference rules (Modus Ponens and Substitution), and by means of additional inference rules that allow to derive the rejected formulas, such rules as Modus Tollens and Reverse Substitution. We call this type of refutation systems \textit{direct} or \textit{{\L}-systems}. For instance, in order to obtain an {\L}-system for the classical logic, {\L}ukasiewicz adds to CPC the same anti-axiom $\dashv p$, but he endows CPC with two new inference rules: Modus Tollens abd Reverse Substitution. 

If we consider only rules Modus Ponens, Substitution, Modus Tollens and Reverse Substitution, not every C-derivation can be converted into {\L}-derivation, and not every C-system can be converted into {\L}-system. In \cite{Staszek_1971} Staszek establishes the conditions under which a C-system can be converted into {\L}-system. Generally speaking, if a propositional language contains implication $\to$ and a formula $p \to (q \to p)$ is derivable, then any C-system can be converted into {\L}-system. For instance, for all intermediate or normal modal logics every C-system can be converted into {\L}-system. If we consider the multiple-conclusion rules with rejected propositions, all C-system become "convertible" into {\L}-systems, and this is an additional reason to use such type rules. 

The following example shows the difference between C- and {\L}-derivations of refutability of $\neg p$ in CPC with rules Modus Ponens (MP) and Substitution (Sb) endowed with an anti-axiom $\dashv p$ and rules Modus Tollens (MT) and Reverse Substitution (RS). 

\begin{table}[h]
\begin{center}
\caption{Example of C- and {\L}-derivation}
\begin{tabular}{l|l|p{2.9cm}||l|p{2.9cm}}
\hline
&\multicolumn{2}{c}{C-derivation}&\multicolumn{2}{c}{{\L}-derivation }\\
\hline
1& $\neg p$ & premiss & $\dashv p$ & anti-axiom\\
2& $\neg (p \to p)$ & from 1 by Sb &$\vdash \neg (p \to p) \to p$&derivable in CPC\\
3& $\neg (p \to p) \to p$& derivable in CPC &$\dashv \neg (p \to p)$& from 1, 2 by MT\\
4& $p$ & by MP from 2, 3 & $\dashv \neg p$ &from 3 by RS\\
5&$\neg p$ is refuted &anti-axiom is derived from $\neg p$ &&\\
\hline
\end{tabular}
\end{center}
\end{table} \label{tbl:example}

\subsection{Multiple-conclusion rules: Further development}

As we mentioned above in the Section \ref{junctives}, the concept of multiple-conclusion rule was introduced by R.~Carnap in \cite{Carnap_Formalization_1943}, where R.~Carnap studied a notion of junctives and rules dealing with junctives. This concept has been developed further by W.~Kneale \cite{Kneale_Province_1956} (some corrections are in \cite{Kneale_1957}) where multiple-conclusion proof was defined. This definition was refined in the 1970-th in the papers by D.~Shoesmith and T.~Smiley and summarized in their book \cite{Shoesmith_Smiley_Book}. At the about the same time multiple-conclusion relations were studied by D.~Scott \cite{Scott_Engendering_1971,Scott_Background_1973, Scott_completeness_1974, Scott_1974}. Let us note that even though D.~Scott considers the relation $\Gamma \vdash \Delta$ where $\Gamma,\Delta$ are finite sets of formulas, he does not regard an expression $\Gamma \vdash \Delta$  for given finite sets of formulas $\Gamma,\Delta$ as an instance of an inference rule: D.~Scott views such an expression as a "conditional statement"  and investigates the means that allow to derive a conditional statement from the set of conditional statements (for more details we refer the reader to \cite{Scott_1974}.) 

In 1999 M.~Kracht in his review \cite{Kracht_Review_1999} suggested to study the admissibility of multiple-conclusion rules, and such a study was carried out, for instance, in \cite{Jerabek_Admissible_2005,Jerabek_Canonical_2009}. Let us also remark that in \cite{Kracht_Judgement_2010} M.~Kracht arrives to a notion of a consequence relation in some respects similar the one introduced in this paper.

\subsection{What This Paper is About} 

In his 1996 paper \cite{Smiley_Rejection_1996} T.~Smiley outlined an approach to combining multiple-conclusion rules with refutability. We are taking a similar approach. We will discuss the similarities and differences later in Section \ref{remarks}. The discussion of philosophical aspects of Smiley's paper can be found in \cite{Johnson_Rejection_1999,Murzi_Hjortland_2009,Incurvati_Smith_2010}. 

In this paper we introduce a metalanguage that gives us the formal syntactic means for working with inference rules (rules thereafter) admitting simultaneously asserted and rejected formulas. In Section \ref{cons} we extend the notion of consequence relation to sets of asserted and rejected propositions (formulas). Then, in the Section \ref{logics}, we define a logics as an ordered pair of a set of asserted and a set of rejected propositions and we introduce the consequence relations and logics that admit rejected propositions independently from asserted propositions, meaning that not asserted proposition does not have to be rejected and vice versa. Then, in the same section, we construct the meta-language and we define inference (derivation). In Section \ref{inference} we will see how a logic understood as a pair can be defined by a deductive system. In the present paper we will not discuss the semantic of such systems.

Let us note that the presence of rejected formulas requires some clarification of how an inference rule is understood. The difference becomes apparent if we compare Modus Ponens in form $A, A \to B/B$ and Modus Tollens $\dashv B, \vdash A \to B/\vdash A$: we cannot write out Modus Tollens without using turnstile and reverse turnstile: if we include the refuted propositions (formulas) into rules, for every premise or conclusion we must indicate whether this proposition is assumed to be asserted or rejected. Let us recall that in \cite{Scott_1974} D.~Scott considers the following possible forms of Modus Ponens\\

\begin{center}
\begin{tabular}{l c l c l c l}
\hline
\multicolumn{7}{c}{Four forms of Modus Ponens}\\ \hline \hline
$A, A \to B \vdash B$ & & $\vdash A \to B$ & & $\vdash A$ & & $\vdash A$\\ \cline{3 - 3}
                      & & $A \vdash B$      & & $\vdash A \to B$ & &$A \vdash B$\\ \cline{5 - 5} \cline{7 - 7}
                      & &                  & & $\vdash B$       & & $\vdash B$ \\ 
(i) & & (ii) & & (iii) & & (iv)\\	\hline		
\end{tabular}    
\end{center}
\begin{center}
Table 1 \label{tbl1}
\end{center}
And he argues that the rule (iii) deserves the name 'Modus Ponens', for "this is methatheoretic statement that the validities of the system are closed under the rule allowing for the detachment of the conclusion of the implication (provided it and its antecedent are valid)." And Scott continues: (i) is a conditional tautology that suggests the rule (iii). Also, he is pointing out that in the {\L}ukasiewicz many-valued logic (i) fails, while (ii),(iii) and (iv) hold. Thus, the rules (i) and (iii) are different. 

It is important to keep in mind that throughout the paper the rules are understood in form (iii). 

The following peculiarity of the derivation systems that admit derivations of asserted propositions from rejected ones is worth mentioning. In the regular deductive systems we can eliminate the rule of Substitution by using axiom schemata instead of axioms. This elimination is possible, because, for example, a derivation like 
\[
\vdash A, \vdash (A \to B), \vdash B, \vdash\sigma(B),
\] 
where $A,B$ are formulas and $\sigma$ is a substitution, can be reduced to 
\[
\vdash \sigma(A), \vdash (\sigma(A) \to \sigma(B)), \vdash \sigma(B)
\] 
and $\sigma(A)$ and $\sigma(A) \to \sigma(B)$ are derivable. The situation changes if we admit the rules containing the rejected propositions as premises and the asserted propositions as conclusions. For example, if we admit the rule 
\begin{equation}
\dashv A, \vdash A \lor B/\vdash B, \label{dpr}
\end{equation} 
the following derivation cannot be reduced like a previous one, even though the rule \eqref{dpr} is structural:
\[
\dashv A, \vdash A \lor B, \vdash B, \vdash \sigma(B),
\]     
because $\dashv \sigma(A)$ may be not derivable. So, we can use the schemata, for instance, in order to define the set of axioms, but we cannot eliminated the rule of Substitution from the deductive system (likewise, we cannot eliminate the rule of Reverse Substitution).

\subsection{What This Paper is Not About} 

There are several very important topics of logic and philosophy of logic that are closely related to the proposed approach. Yet, we will not discuss them, because each of these topics deserves a separate consideration. We will be focusing on studying the meta-logic for rules, and we will not be touching the following topics (that we mention here only in order to underscore their relations to the introduced meta-logic). So, we will not discuss in this paper:

\begin{itemize}

\item The relations between rejection and negation (see, e.g. \cite{Smiley_Rejection_1996,Rumfitt_Yes_2000,Rumfitt_Unilateralism_2002,Gibbard_Price_2002, Restall_Multiple_2005,Rumfitt_Knowledge_2008}). In particular, between $\neg A$ and cases when $\dashv \sigma(A)$ is valid for every substitution $\sigma$ (the reader can consider the formula $\Diamond p \land \neg \Diamond p$, each substitution instance of which is rejected in $\SF$.)  

\item The role and meaning of logical constants (see, e.g. \cite{Prawitz_Natural_1965,Rumfitt_Unilateralism_2002,Dummett_Yes_2002,Dummett_Logical_1991}).

\item The applications to paraconsistent logics (see, e.g. \cite{Paraconsistency_2013}). We just note that there is a big difference between situation when $\vdash A$ and $\vdash \neg A$ are permissible (paraconsistency) and the situation when $\vdash A$ and $\dashv A$ are permissible (we call this \textit{incoherency} or \textit{ambivalence}).

\item The philosophical aspects of multiple-conclusion inference (see, e.g. \cite{Restall_Multiple_2005,Rumfitt_Knowledge_2008}).
 
\item The hypersequent proof systems for rules ( see, e.g. \cite{Iemhoff_Metcalfe_Proof_2009,Iemhoff_Metcalfe_Hypersequent_2009}) .

\item The tableaux methods (see, e.g. \cite{Ishimoto_Axiomatic_1996,Haehnle_Tableaux_2001}).

\item The refutation systems for particular logics (see e.g. \cite{Goranko_Refutation_1994,Skura1998,Sochacki_Book,Skura_Refutation_2011_H, Skura_Book_2013}).
\end{itemize}

\section{Consequence Relations} \label{cons}

In this section we introduce the consequence relations that include simultaneously rejected and asserted propositions. But first and foremost we need to introduce some notions and notation.

\subsection{Language}

Let $\Frm$ be a set of propositional formulas built in a regular way from a countable set of (propositional) variables $\Vars$ and the finite set of (finitely-ary) connectives $f_1,\dots,f_n$ (not containing signs $\mand,\mor,\mimpl,\mneg,\mtop,\mbot,\mpos,\mrej $ that we reserve for use in the meta-language). For propositional variables we will use letters $p,q,r,s$ may be with indexes. To denote the propositional formulas we will use the capital Roman letters (sometimes with indexes) from the beginning of alphabet, while the capital Roman letters (sometimes with indexes) from the end of the alphabet will be used as meta-variables that can be substituted with propositional formulas. For instance, $X \impl Y \land Z$ is a schemata, while $A \impl B \land C$, where $A \bydef p$, $B \bydef q_1 \lor q_2$, $C \bydef \neg r$, is a formula. 

With each propositional variable $p \in \Vars$, we associate a formula variable $X_p$ ranging over $\Frm$. For instance,  $A(p_1,\dots,p_n)$ is a formula on $n$ variables, while $A(X_{p_1},\dots,X_{p_n})$ is a schemata. We will use tilde for denoting schemata, i.e. $\tilde{A}$ is a schemata obtained from a formula $A$ by replacing the propositional variables with corresponding formula variables. Roughly speaking, a schemata is a formula to which we allow to apply substitution. 

A formulas $A(B_1,\dots,B_n)$, obtained from a schemata $\tilde{A} \bydef A(X_{p_1},\dots, X_{p_n})$ (or from a formula $A$ for this matter), by substituting formulas for formula variables, is a \textit{substitution instance} of $\tilde{A}$ (instance, for short).

As usual, a substitution is a mapping $\sigma: \Vars \to \Frm$ and by $\sigma(A)$ we denote a result of simultaneous substitution in $A$ of $\sigma(p)$ for $p$ for every variable occurring in $A$. If $\Gamma$ is a set of formulas and $\sigma$ is a substitution by $\sigma(\Gamma)$ we denote $\set{\sigma(A)}{A \in \Gamma}$. The set of all substitutions will be denoted by $\Sigma$. 

Given a set of formulas $\Gamma$ and a substitution $\sigma$, we say that the set $\Gamma$ is \textit{closed under substitutions}, if $\sigma(\Gamma) \subseteq \Gamma$, and we say that $\Gamma$ is \textit{closed under reverse substitutions}, if $\Gamma \subseteq \sigma(\Gamma)$. It is not hard to see that $\Gamma$ is closed under substitutions if and only if its complement $\Frm \setminus \Gamma$ is closed under reverse substitutions. 

\subsection{Meta-Language: Atomic Statements}

In order to include the rejected propositions into logic, we need to be able, given a formula $A$, to distinguish whether $A$ is asserted or rejected. To achieve this, we use meta-language in which we we can express assertion and rejection. In this section we introduce the metalanguage. 

We start with a notions of statement and schema-statement.

\begin{definition} If $A \in \Frm$ is a formula then the expressions $\mpos A$ and $\mrej A$ we call \textit{atomic statements}. $\mpos A$ is a \textit{positive atomic statement} and $\mpos A$ is a \textit{negative atomic statement}. $A$ is called a \textit{propositional part} of $\mpos A$ or $\mrej A$. By $\mnu A$ we denote an atomic statement with propositional part $A$, that is $\mnu A$ can be $\mpos A$ or $\mrej A$. And by $\notmnu A$ we denote an atomic statement of the "opposite sign", that is $\notmnu A$ is $\mpos A$, if $\mnu A$ is $\mrej A$, and  $\notmnu A$ is $\mrej A$, if $\mnu A$ is $\mpos A$.
\end{definition}

By $\mAStms$ w denote the set of all atomic statements. $\mAStms^+$ and $\mAStms^-$ are respectively the sets of all positive and negative atomic statements. Thus, $\mAStms = \mAStms^+ \cup \mAStms^-$ and $\mAStms^+ \cap \mAStms^- = \emptyset$. 

\begin{remark}[about denotation] J.~{\L}ukasiewicz is using for this purpose $\vdash A$ and $\dashv A$, but we reserve the sign $\vdash$ for consequence relations. In \cite{Smiley_Rejection_1996} T.~Smiley  is using the sign $*$ to denote that a proposition is rejected, that is, $*A$ denotes 'A is rejected'. In \cite{Humberstone_Revival_2000} L.~Humberstone is using $[+]A$ and $[-]A$ and calls 'a signed formula' what we call 'an atomic statement'.
\end{remark}

\subsection{Consequence Relation}

In this section, we extend the notion of consequence relation (e.g. \cite{Scott_completeness_1974}) from sets of formulas to sets of atomic statements. We will use the customary conventions: if $\Gamma, \Delta \subseteq \mAStms$, then $\Gamma,\Delta$ denotes $\Gamma \cup \Delta$ and, if $\alpha \in \mAStms$, then $\Gamma,\alpha$ denotes $\Gamma \cup \{\alpha\}$. 

\begin{definition} A \textit{consequence relation} is a binary relation on the class $\CFin(\mAStms)$ of all finite sets of statements that satisfies the following conditions: 
\begin{itemize}
\item[(R)] if $\Gamma \cap \Delta \neq \emptyset$, then $\Gamma \vdash \Delta$
\item[(M)] if $\Gamma \vdash \Delta$, then $\Gamma, \Gamma_1 \vdash \Delta,\Delta_1 $
\item[(T)] if $\Gamma, \mnu A \vdash \Delta$ and $\Gamma \vdash \mnu A,\Delta$, then $\Gamma \vdash \Delta$,
\end{itemize}
where $\Gamma,\Gamma_1,\Delta,\Delta_1$ are finite sets of atomic statements, $\mnu A$ is an atomic statement.  
\end{definition}

Next, we want to define a notion of structural consequence relation, but the regular definition cannot be used for the following reason: the substitutions into negative statements can lead to unwanted results: if $\vdash \mrej p$ and we allow to substitute any formula for $p$, we will reject every formula. 

\begin{definition} A consequence relation $\vdash$ is called \textit{structural} if for every $\sigma \in \Sigma$
\[
\begin{split}
& \Gamma \vdash \Delta \text{ entails } \sigma(\Gamma) \vdash \sigma(\Delta) \text{ for every } \Gamma,\Delta \subseteq \mAStms^+ \text{ and}\\
& \sigma(\Gamma) \vdash \sigma(\Delta) \text{ entails } \Gamma \vdash \Delta \text{ for every } \Gamma,\Delta \subseteq \mAStms^-.
\end{split}
\]
\end{definition}

It is not hard to see that a meet of structural consequence relations is a structural consequence relation. Hence, for any given set of pairs $\DS \bydef \{\Gamma_i/\Delta_i, i\in I\}$, where $\Gamma_i,\Delta_i \subseteq \mAStms$, there is a smallest consequence relation $\vdash_\DS$ such that $\Gamma_i \vdash_\DS \Delta_i$ for all $i \in I$. We will say that the relation $\vdash_\DS$ is \textit{defined}
by $\DS$.

From this point forward we consider only structural consequence relations. The class of all structural consequence relations is denoted by $\Frm^\vdash$. It is easy to see that an arbitrary meet of consequence relations is a consequence relation. Hence, $\Frm^\vdash$ forms a complete lattice relative to set meet and closed joins, and $\vdash$ defines on $\Frm$ a closure operator (cf. \cite[Proposition 1.1.]{Scott_completeness_1974}). Thus, every set $\class{K} \bydef \{\Gamma_i/\Delta_i\}$, where $\Gamma_i,\Delta_i \subseteq \CFin\mAStms, i\in I$, defines a consequence relation $\vdash_\class{K}$ - the smallest consequence relation such that $\Gamma_i \vdash \Delta_i$, for all $i \in I$. 

\section{Logics} \label{logics}

In this section we introduce the notion of logic that admits the rejected formulas.

\begin{definition} \textit{Logic} is an ordered pair of sets of formulas $\Log{L} = \lbr \LogL^+, \LogL^- \rbr$, where $\LogL^+$ is closed under substitutions and $\LogL^-$ is closed under reverse substitutions. $\LogL^+$ is a \textit{positive} or an \textit{asserted} part of $\Log{L}$, or a set of the \textit{theorems} of $\Log{L}$. $\LogL^-$ is a \textit{negative} or a \textit{rejected} part of $\Log{L}$, or a set of \textit{anti-theorem} of $\Log{L}$. 
\end{definition}

Given a logic $\Log{L}$, by $\Log{L}^+$ and $\Log{L}^-$ we respectively denote a positive and a negative parts of $\Log{L}$.

Every consequence relation $\vdash$ defines a logic $\Log{L}_\vdash \bydef \lbr \LogL^+,\LogL^- \rbr$, where
\begin{equation}
\LogL^+ = \set{A}{A \in \Frm, \vdash \mpos A} \text{ and } \LogL^- = \set{A}{A \in \Frm, \vdash \mrej A}.  \label{logcr}
\end{equation}

\begin{definition}
We use the following terminology: a logic $\Log{L}$ is
\[
\begin{array}{lll}
\text{\textit{coherent}} & \text{:} &\Log{L}^+ \cap \Log{L}^- = \emptyset \\
\text{\textit{full}} & \text{:} &\Log{L}^+ \cup \Log{L}^- = \Frm \\
\text{\textit{standard}} & \text{:} &\text{full and coherent}\\
\text{\textit{trivial}} & \text{:} &\Log{L}^+ = \Log{L}^- = \Frm \\
\text{\textit{degenerate}} & \text{:} &\Log{L}^+ = \Log{L}^- = \emptyset \\
\end{array}
\]
\end{definition}

\begin{prop} Every logic $\Logl$ can be defined by a consequence relation.
\end{prop}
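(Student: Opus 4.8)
The plan is to realise $\Logl=\langle\Logl^+,\Logl^-\rangle$ as $\Log{L}_\vdash$ for the consequence relation generated by the obvious axioms. I would set
\[
\DS \bydef \set{\emptyset/\mpos A}{A\in\Logl^+}\cup\set{\emptyset/\mrej A}{A\in\Logl^-},
\]
and let ${\vdash}\bydef{\vdash_\DS}$ be the smallest structural consequence relation containing $\DS$, which exists since $\Frm^\vdash$ is a complete lattice. One inclusion is immediate: if $A\in\Logl^+$ then $\emptyset/\mpos A\in\DS$, hence $\vdash\mpos A$ and $A\in\Log{L}_\vdash^+$; likewise $\Logl^-\subseteq\Log{L}_\vdash^-$. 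So it remains to prove the converse inclusions $\Log{L}_\vdash^+\subseteq\Logl^+$ and $\Log{L}_\vdash^-\subseteq\Logl^-$, i.e. that generating $\vdash$ produces no spurious theorems or anti-theorems.

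For the converse I would exhibit an explicit structural consequence relation $\vdash'$ with $\DS\subseteq{\vdash'}$ and $\Log{L}_{\vdash'}=\Logl$; since $\vdash$ is the smallest such relation, ${\vdash}\subseteq{\vdash'}$, and therefore $\Log{L}_\vdash\subseteq\Log{L}_{\vdash'}=\Logl$, which closes the argument. The natural candidate is a ``semantic'' relation built from a family of two-valued evaluations of atomic statements indexed by substitutions. On the positive fragment this is transparent: declare $\mpos A$ to hold at $\sigma$ iff $\sigma(A)\in\Logl^+$, and set $\Gamma\vdash'\Delta$ iff for every $\sigma\in\Sigma$, whenever all statements of $\Gamma$ hold at $\sigma$ then some statement of $\Delta$ holds at $\sigma$. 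Conditions (R), (M), (T) hold for any such family, and the universal quantification over $\sigma$ together with the substitution-closure of $\Logl^+$ yields both the forward structurality of the positive fragment and the identity $\{A : \vdash'\mpos A\}=\Logl^+$ (one direction uses $\sigma=\mathrm{id}$, the other the closure of $\Logl^+$).

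The negative fragment is the crux, and I expect it to be the main obstacle. Here the structurality requirement is reversed---$\sigma(\Gamma)\vdash'\sigma(\Delta)$ must entail $\Gamma\vdash'\Delta$---so the positive recipe does not transcribe directly; the evaluation of $\mrej A$ has to be arranged dually, exploiting that $\Logl^-$ is closed under reverse substitutions, equivalently that $\Frm\setminus\Logl^-$ is closed under substitutions (so that $\sigma(A)\in\Logl^-$ implies $A\in\Logl^-$). The delicate point is that reflexivity forces $\sigma(\Gamma)\vdash'\sigma(\Delta)$ to hold whenever the propositional parts of a premise and a conclusion are unifiable, and one must check that, after closing under (T), this does not drag into $\Log{L}_{\vdash'}^-$ any formula outside $\Logl^-$. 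I expect the verification that $\vdash'$ is genuinely structural on $\mAStms^-$ while still satisfying $\{A : \vdash'\mrej A\}=\Logl^-$---that is, that non-injective substitutions together with cut do not over-generate anti-theorems---to be the technically hardest step, and it is precisely here that the reverse-substitution closure hypothesis on $\Logl^-$ must be used in full.
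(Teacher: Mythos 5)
There is a genuine gap: you never finish the proof. Your opening reduction buys nothing --- the proposition asks only for \emph{some} consequence relation $\vdash$ with $\Log{L}_{\vdash}=\Logl$, and your plan for the ``converse inclusion'' is to exhibit exactly such a relation $\vdash'$ and then invoke minimality of the generated relation; but once $\vdash'$ is in hand it is itself the required witness, so the detour through the smallest structural consequence relation containing $\set{\emptyset/\mpos A}{A\in\LogL^+}\cup\set{\emptyset/\mrej A}{A\in\LogL^-}$ is idle. The real content is therefore the construction of $\vdash'$, and you only carry it out on the positive fragment. For the negative fragment you state what you ``expect'': you do not define how $\mrej A$ is to be evaluated, you do not verify the reverse structurality clause, and you do not verify $\set{A}{\vdash'\mrej A}=\LogL^-$. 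Announcing that the hardest step is hard, and that the reverse-substitution closure of $\LogL^-$ ``must be used in full'' there, is a diagnosis, not a proof.

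For comparison, the paper's own argument is a one-liner that avoids all of this machinery: it puts $\Gamma\vdash\Delta$ iff $\Delta\cap\bigl(\Gamma\cup\set{\mpos A}{A\in\LogL^+}\cup\set{\mrej A}{A\in\LogL^-}\bigr)\neq\emptyset$, i.e. bare reflexivity with the atomic statements of $\Logl$ thrown in as axioms. Conditions (R), (M), (T) are immediate, and since $\vdash\mnu A$ with empty antecedent holds precisely when $A\in\LogL^+$, respectively $A\in\LogL^-$, this relation defines $\Logl$; substitution-closure of $\LogL^+$ and reverse-substitution-closure of $\LogL^-$ are exactly what the structurality check consumes. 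Your instinct about where the danger lies is in fact sound: reflexivity applied to purely negative $\Gamma,\Delta$ whose propositional parts are identified by a non-injective substitution is precisely the awkward case in the reverse structurality clause, and the paper's proof passes over it in silence (it is harmless for computing $\Log{L}_\vdash$, since the logic depends only on sequents with empty antecedent, where no such identification can occur). But identifying the sensitive spot does not discharge it; as written, your argument establishes only $\Logl\subseteq\Log{L}_\vdash$ and leaves the other inclusion conjectural.
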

\begin{proof} Indeed, given a logic $\Logl = \lbr \LogL^+, \LogL^-\rbr$, one can consider the following consequence relation: for every finite $\Gamma,\Delta \subseteq \Frm$
\begin{equation}
\Gamma \vdash \Delta \text{ if and only if } \Delta \cap (\Gamma \cup \set{\mpos A}{A \in \LogL^+} \cup \set{\mrej A}{A \in \LogL^-}) \neq \emptyset, \label{mincr}
\end{equation}
that is, we take a trivial consequence relation $\Gamma \vdash \Delta \means \Gamma \cap \Delta \neq \emptyset$ and add as axioms atomic statements obtained form the formulas of $\Log{L}$. It is not hard to see that $A \in \LogL^+$ (or $A \in \LogL^-$) if and only if $\vdash \mpos A$ (or, respectively, $\vdash \mrej A$).
\end{proof}

Any given consequence relation uniquely defines a logic, while the converse is not necessarily true: a given logic $\Log{L}$ may be defined by distinct consequence relations (see Section \ref{remarks} for examples). So, we can consider a class
\[
\vdash_\Log{L} \bydef \set{\vdash \in \Frm^\vdash}{\Log{L} = \Log{L}_\vdash}.
\]  
It is clear that $\vdash_\Log{L}$ is closed under set meet. Hence, $\vdash_\Log{L}$ has a smallest (relative to set inclusion) element, namely the relation defined by \eqref{mincr}. An example of a logic $\Log{L}$ such that $\vdash_\Log{L}$ is not closed under closed joins\footnote{As an example one can consider two calculi defining a logic of 7-element single-generated Heyting algebra: one containing the rule $(\neg \neg p \to p) \to (p \lor \neg p)/(\neg\neg p \lor \neg p)$, and another -- containing the rule $p \lor q/p,q$.}  

In presence of multiple-conclusion rules to define how a formula can be derived from a set of formulas may be somewhat complex (see e.g. \cite{Shoesmith_Smiley_Book}). In order to simplify it, in the following Section we will extend the meta-language endowing it with meta-connectives.  

\subsection{Meta-Language: Statements}

In this section, we enrich the meta-language by introducing statements that play a central role in our research. 

First, we introduce the meta-connectives $\mand, \mor, \mimpl, \mneg, \mtop \mbot$ and we define a notion of statement.

\begin{definition} Atomic statement is \textit{statement}. $\mtop,\mbot$ are statements. If $\alpha,\beta$ are statements then $\alpha \mand \beta, \alpha \mor \beta, \alpha \mimpl \beta, \mneg \alpha$ are statements. The set of all statements we denote by $\Stms$. 
\end{definition}

We will also use tilde to denote \textit{schemata statements}: $\tilde{\alpha}$ denotes the schemata statement obtained from a given statement $\alpha$ by replacing all formulas with schemata. A \textit{substitution instance} (instance, for short) of a given schemata statement $\tilde{\alpha}$ is a statement obtained from $\tilde{\alpha}$ by simultaneous substitution of formula variables with formulas.

Let us observe that statements of type $\mpos A_1 \mand \dots \mand \mpos A_n$ and $\mpos A_1 \mor \dots \mor \mpos A_n$ are Carnap's junctives (cf. \cite[Section D, {\$}21]{Carnap_Formalization_1943}), the former being a conjunctive and the latter being a disjunctive. Also, the schemata statements of form $\mpos \tilde{A_1} \mand \dots \mand \mpos \tilde{A_n} \mimpl \mpos \tilde{B_1} \mor \dots \mor \mpos \tilde{B_m}$ represent the multiple-conclusion rules (comp. \cite[Section 2.3]{Shoesmith_Smiley_Book}).  

Next, we introduce the notions of positive and negative statements.

\begin{definition} A statement that has no occurrences of $\bot$ and negative atomic statements is \textit{positive}. A statement that has no occurrences of $\top$ and positive atomic statements is \textit{negative}. If $\Gamma$ is a set of statements, by $\Gamma^+$ and $\Gamma^-$ we denote the subsets (may be empty) of all positive or, respectively, negative members of $\Gamma$.
\end{definition}
For instance, $\mpos A \mimpl \mpos B$ is a positive statement; $\mrej B$ is a negative statement; a statement $(\mpos A \mimpl \mpos B) \land \mrej B$ is neither positive, nor negative. 

If $\sigma \in \Sigma$ is a substitution, we extend the scope of $\sigma$ from propositional formulas to statements by induction in the following way: 
\[
\begin{array}{ll}
\sigma(\top) \bydef \top , \quad \sigma(\bot) \bydef \bot&  \\
\sigma(\mnu A) \bydef \mnu \sigma(A) & \text{for every } A \in \Frm\\
\sigma(\alpha \circ \beta) \bydef \sigma(\alpha) \circ \sigma(\beta) & \circ \in \{\mand,\mor,\mimpl\} \text{ and for all } \alpha,\beta \in \Stms\\
\sigma(\mneg \alpha) \bydef \mneg \sigma(\alpha) & \text{for all } \alpha \in \Stms\\
\end{array}
\]
In other words, given a statement $\alpha$, $\sigma(\alpha)$ is obtained from $\alpha$ by replacing every occurrence of statement $\mnu A$ with $\mnu \sigma(A)$.   

\section{Deductive System and Inference} \label{inference}

In this section, we extend the notions of deductive system and inference that accommodate the rejected propositions. 

\begin{definition} A \textit{deductive system} is an ordered pair $\lbr \Ax; \Rules \rbr$, where $\Ax \subseteq \Stms$ is a set of \textit{axiom statements} (axioms for short) and $\Rules$ is a set of statement schemata that we call \textit{rules}. The deductive system $\DS_0 \bydef \lbr \emptyset; \emptyset \rbr$ we call a \textit{zero-system}.
\end{definition}

We consider the following three meta-inference rules
\[
\begin{array}{ll}
\text{for every } \alpha,\beta \in \Stms, \text{ from } \alpha \mimpl \beta \text{ and } \alpha \text{ infer } \beta & (MMP)\\
\text{for every } \alpha \in \Stms^+ \text{ and } \sigma \in \Sigma, \text{ from } \alpha \text{ infer } \sigma(\alpha) & (Sb)\\
\text{for every } \alpha \in \Stms^- \text{ and } \sigma \in \Sigma, \text{ from } \sigma(\alpha) \text{ infer } \alpha & (RS)\\
\end{array}
\]

If $\Gamma$ is a set of statements and $\sigma$ is a substitution, we let $\sigma(\Gamma) \bydef \set{\sigma(\alpha)}{\alpha \in \Gamma}$. That is, $\sigma(\Gamma)$ is a set of $\sigma$-substitutions in each member of $\Gamma$. Given a set of statements $\Gamma$, we say that $\Gamma$ is \textit{closed under substitutions} (under Sb, for short), if $\sigma(\Gamma) \subseteq \Gamma$ for every $\sigma \in \Sigma$. And we say that $\Gamma$ is \textit{closed under reverse substitutions}, if $\Gamma \subset \sigma(\Gamma)$ for every $\sigma \in \Sigma$.

The statements, obtained from the axioms of the classical propositional calculus (CPC) (e.g. the axioms of \cite[Group A1]{Kleene_Intro}) by substituting the propositional variables with statements, are called \textit{meta-axioms}.  If $\alpha$ is a statement, then the statements 
\begin{equation*}
\begin{array}{lll}
(\top \mimpl \alpha) \mimpl \alpha & \quad &(\text{Ax}\top) \\
\bot \mimpl \alpha & \quad  & (\text{Ax}\bot)
\end{array} \label{addaxioms}
\end{equation*}
are meta-axioms. 

Now, we can use a regular definition of a (Hilbert style) inference.

\begin{definition} Let $\DS \bydef \lbr \Ax, \Rules \rbr$ be a deductive system, $\Gamma$ be a set of statements and $\alpha$ be a statement. A sequence of statements $\alpha_1,\dots, \alpha_n$ is an \textit{inference} (a \textit{derivation}) of $\alpha$ from $\Gamma$ over $\DS$, if $\alpha_n$ is $\alpha$ and for every $1 \leq i \leq n$one of the following hold
\begin{itemize}
\item[(a)] $\alpha_i$ is a meta-axiom
\item[(b)] $\alpha_i$ is an axiom of $\DS$
\item[(c)] $\alpha_i$ is an instance a rule of $\DS$
\item[(d)] $\alpha_i$ obtained by (Sb) or (RS) from some $\alpha_j$, where $j < i$ 
\item[(e)] $\alpha_i$ obtained by (MMP) from some $\alpha_j,\alpha_k$, where $j,k < i$. 
\end{itemize}
\end{definition}

If there exists an inference of $\alpha$ from $\Gamma$ over $\DS$, we write $\Gamma \vdash_\DS \alpha$ and we say that $\alpha$ \textit{is derivable from $\Gamma$ over} $\DS$. If $\DS$ is a zero-system, we will omit the reference to $\DS$  and we say that $\alpha$ \textit{is derivable from} $\Gamma$. It is easy to see that $\Gamma \vdash \alpha$ if there is a sequence of statements $\alpha_1,\dots,\alpha_n$ such that $\alpha = \alpha_n$ and each $\alpha_i$ is either meta-axiom, or obtained from the preceding statements by (Sb),(Rs) or (MMP).

Obviously, every deductive system $\DS$ defines a consequence relation $\Gamma \vdash_\DS \alpha$ on the finite sets of statements and statements. In its own turn, the consequence relation $\vdash_\DS$ induces a consequence relation on finite sets of atomic statements:
for all $A_1,\dots,A_n,B_1,\dots, B_n \in \Frm$
\begin{equation}
A_1,\dots,A_n \vdash_\DS B_1,\dots, B_n \text{ if and only if } \vdash_\DS A_1 \mand \dots \mand A_n \mimpl  B_1 \mor \dots \mor B_n
\end{equation}
and defines a logic $\Log{L}_\DS$:
\begin{equation}
\Log{L}_\DS^+ \bydef \set{\mpos A}{\vdash_\DS \mpos A, A \in \Frm} \text{ and } \Log{L}_\DS^- \bydef \set{\mrej A}{\vdash_\DS \mrej A, A \in \Frm}. 
\end{equation}
It is not hard to see that the converse is also true: any logic can be defined by some deductive system (which is not necessarily unique). 

\begin{definition}
We say that a deductive system is \textit{full}, \textit{coherent}, \textit{standard} or \textit{trivial} if the logic, defined by this system, is respectively full, coherent, standard or trivial.
\end{definition}

Let us consider an example.

\begin{example} The {\L}ukasiewicz's refutation system for the classical logic (in the signature $\{\to,\neg\}$) can be defined by the deductive system $\lbr \Ax, \Rules \rbr$, consisting of the following four axioms and two rules: 
\[
\begin{array}{ll}
 \mpos ((p \to q) \to ((q \to r)  \to (p \to r))) & \text{axiom (axiom of CPC)}\\
 \mpos((\neg p \to p) \to p) & \text{axiom (axiom of CPC)}\\
 \mpos (p \to (\neg p \to q)) & \text{axiom (axiom of CPC)}\\
 \mrej p & \text{{axiom (\L}ukasiwicz's anti-axiom)}\\
(\mpos X \mand \mpos (X \to Y)) \mimpl  \mpos Y & \text{rule (Modus Ponens)} \\
(\mrej Y \mand \mpos (X \to Y)) \mimpl  \mrej X & \text{rule (Modus Tollens)}
\end{array}
\]
\end{example}
Let us note, that we did not include into our deductive system the rules of substitution and reverse substitution used by {\L}ukasiewicz, because they are already included into the definition of inference.

Let us observe, that (MMP) with the meta-axiom $x \mimpl (y \mimpl (x \mand y))$ allows to apply the inner rules, that is, the inference rules specific for a given logics, like Modus Ponens, for instance. Indeed, suppose we derived the statements $\mpos A$ and $\mpos(A \to B)$. Then, 
\[
\begin{array}{lll}
1. & \mpos A & \text{was derived} \\
2. & \mpos (A \to B) & \text{was derived} \\
3. & \mpos A \mimpl (\mpos(A \to B) \mimpl (\mpos A \mand \mpos(A \to B))) & \text{a substitution instance of } \\
&& x \mimpl (y \mimpl (x \mand y)) \\
4. & \mpos A \mand \mpos(A \to B) & \text{from } 1,2 \text{ and } 3 \text{ by (MMP)}\\
5. & (\mpos A \mand \mpos (A \to B)) \mimpl  \mpos B & \text{an instance of Modus Ponens}\\
6. & \mpos B & \text{from 4  and 5 by (MMP)}
\end{array}
\]

As usual, two statements $\alpha$ and $\beta$ are equivalent if $\vdash \alpha \mimpl \beta$ and $\vdash \beta \mimpl \alpha$. 

\begin{prop} \label{conjrules} Every statement $\alpha$ is equivalent to a meta-conjuction of the statements of form $\mnu A_1 \mand \dots \mand \mnu A_n \mimpl \mnu B_1 \mor \dots \mor \mnu B_m$. 
\end{prop}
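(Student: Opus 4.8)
The plan is to exploit the fact that, with respect to the meta-connectives $\mand, \mor, \mimpl, \mneg$ and the constants $\mtop, \mbot$, the meta-logic of the zero-system is nothing but classical propositional logic whose propositional atoms are the atomic statements $\mpos A, \mrej A$ ($A \in \Frm$). Once this is made precise, the assertion reduces to the conjunctive normal form theorem, with each resulting clause rewritten as an implication.

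First I would observe that provable equivalence of statements (i.e.\ $\vdash \alpha \mimpl \beta$ together with $\vdash \beta \mimpl \alpha$) coincides with classical tautological equivalence once every atomic statement in $\mAStms$ is regarded as an opaque propositional variable. Indeed, the meta-axioms are exactly the CPC axioms with arbitrary statements substituted for the propositional variables, together with $(\mtop \mimpl \alpha) \mimpl \alpha$ and $\mbot \mimpl \alpha$, and the only meta-rule relevant here is (MMP), meta modus ponens. Crucially, the normal-form manipulations below never touch the propositional part of an atomic statement, so neither (Sb) nor (RS) is ever invoked; all that is needed is that the fragment generated by $\mand, \mor, \mimpl, \mneg, \mtop, \mbot$ over the atoms $\mAStms$ derives, via (MMP), every classical tautology. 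This is the completeness of CPC, reading $\mtop$ as verum and $\mbot$ as falsum.

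Given this, I would apply the classical conjunctive normal form theorem to $\alpha$: it is provably equivalent to a meta-conjunction of clauses, each clause being a meta-disjunction of literals, where a literal is an atomic statement $\mnu C$ or its meta-negation $\mneg \mnu C$. It then remains to rewrite a single clause. Separating the negated atoms from the unnegated ones, a clause has the shape $\mneg \mnu A_1 \mor \dots \mor \mneg \mnu A_n \mor \mnu B_1 \mor \dots \mor \mnu B_m$; using the provable De Morgan equivalence between $\mneg \mnu A_1 \mor \dots \mor \mneg \mnu A_n$ and $\mneg(\mnu A_1 \mand \dots \mand \mnu A_n)$, followed by the provable equivalence between $\mneg \gamma \mor \delta$ and $\gamma \mimpl \delta$, this clause is provably equivalent to $\mnu A_1 \mand \dots \mand \mnu A_n \mimpl \mnu B_1 \mor \dots \mor \mnu B_m$, which is exactly the required shape. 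The degenerate cases are absorbed by the constants: an empty antecedent is read as $\mtop$ and removed using $(\mtop \mimpl \alpha) \mimpl \alpha$, while an empty consequent is read as $\mbot$, both constants being part of the meta-language.

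The only real work is the first step, justifying that the meta-logic is classical over the atomic statements, and even this is routine once one notes that every Boolean identity the argument needs (distribution of $\mand$ over $\mor$, De Morgan, double negation, and the conversion of $\mneg \gamma \mor \delta$ into $\gamma \mimpl \delta$) lives entirely within the $\mand, \mor, \mimpl, \mneg, \mtop, \mbot$ fragment and therefore depends only on (MMP) and the CPC-based meta-axioms, never on the substitution rules. Hence each such identity is provably available, and the construction goes through for an arbitrary statement $\alpha$.
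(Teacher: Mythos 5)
Your proposal is correct and follows essentially the same route as the paper's own proof: reduce to conjunctive normal form using the classical character of the meta-level, then rewrite each clause $\mneg \mnu A_1 \mor \dots \mor \mneg \mnu A_n \mor \mnu B_1 \mor \dots \mor \mnu B_m$ via De Morgan and the equivalence of $\mneg \gamma \mor \delta$ with $\gamma \mimpl \delta$. You supply somewhat more detail than the paper does --- in particular the explicit justification that only (MMP) and the CPC-based meta-axioms are needed (never (Sb) or (RS)), and the treatment of empty antecedents and consequents via $\mtop$ and $\mbot$ --- but the underlying argument is the same.
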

\begin{proof} In the classical logic, every formula is equivalent to a formula in conjunctive normal form. Hence, every statement $\alpha$ is equivalent to a meta-conjunction of statements, each of which has the following form
\begin{equation}
\mneg \mnu A_1 \mor \dots \mor \mneg \mnu A_n \mor \mnu B_1 \mor \dots \mor \mnu B_m. \label{prop2_1}  
\end{equation}
It is clear that\eqref{prop2_1} is equivalent to
\begin{equation}
\mneg (\mnu A_1 \mand \dots \mand \mnu A_n) \mor \mnu B_1 \mor \dots \mor \mnu B_m \label{prop2_2}  
\end{equation}
and \eqref{prop2_2} is equivalent to
\begin{equation*}
(\mnu A_1 \mand \dots \mand \mnu A_n) \mimpl \mnu B_1 \mor \dots \mor \mnu B_m.   
\end{equation*}
\end{proof}

From the Proposition \ref{conjrules}, it follows that $\alpha$ is equivalent to a meta-conjunction of rules for any given deductive system $\DS$ as long as for every $\sigma \in \Sigma, \vdash_\DS \sigma(\alpha)$.  

\section{Concluding Remarks} \label{remarks}

As mentioned at the beginning of the paper, in \cite{Smiley_Rejection_1996} T.~Smiley has introduced a calculus for CPC contains rules for rejection proposition. Among these rules he uses the following\footnote{We are using the notation from the present paper.}
\begin{equation}
\begin{array}{ll}
\mrej A \vdash \mpos \neg A & \quad \quad  (r1)\\
\mpos \neg A \vdash \mrej A & \quad \quad   (r2)
\end{array}
\end{equation} Let us observe that the above rules yield that $\vdash \mrej A$ entails that for every substitution $\sigma \in \Sigma, \vdash\mrej \sigma(A)$, i.e. substitutions preserve validity of rejection. Indeed, from $\vdash \mrej A$ and the rule (r1) we infer $\vdash \mpos \neg A$. Since $\mpos \neg A$ is an assertion, for every substitution  $\sigma \in \Sigma$ we have $\vdash \neg \sigma(A)$ and, using rule (r2), we get $\vdash \mrej \sigma(A)$. Thus, the {\L}ukasiewicz's axiom for rejection $\mrej p$, where $p$ is a propositional variable, is not valid in the Smiley's logic: $\vdash \mrej p$ yields $\mrej A$ for every formula $A$, while Smiley's logic is not trivial. In other words, the Smiley's logic does not admit the rule of reverse substitution. So, in the Smiley's logic the formula $p$ is neither asserted, nor rejected.

Let us remark that the above considerations can be applied to every logic in which rejection of a formula is equal to assertion of negation of this formula. 

Let us also note that the use of multiple-conclusion rules gives us an ability to construct distinct consequence relations defining standard classical propositional logic. Indeed, one can add the rule $p \lor q/p,q$ to the {\L}ukasiewicz's calculus and the obtained calculus will still define the classical logic. In fact, there is infinite set of distinct consequence relations defining the classical logic: for every $k > 1$ we can add to the {\L}ukasiewicz's calculus the rule 
\[
R_k \bydef \top/\{(p_i \eqv p_j), i\neq j, 1 \leq i,j \leq 2^k \}. 
\]
It is not hard to see that the rule $R_k$ is valid in a Boolean algebra with $2^k$ elements, but is not valid in any Boolean algebra with more than $2^k$ elements. Thus, the calculi obtained from the {\L}ukasiewicz's calculus by adding the rules $R_k$ are distinct and non-trivial.   

In conclusion, we note that the presence of the rule of reverse substitution makes semantic of such logics much more complex and calls for the use of matrices similar to Q-matrices introduced by G.~Malinowski (see, e.g. \cite{Malinowski_Q_Consequence_1991}). We will discuss the semantic for the logic with rejection in a separate paper.  

\section*{Acknowledgments} The author wishes to express his gratitude to A.~Muravitsky, the discussions with whom helped during the work on this paper.


\bibliographystyle{acm}

\def\cprime{$'$}

\end{document}